\theoremstyle{plain} 
\newtheorem{theorem}{Theorem}
\newtheorem{corollary}[theorem]{Corollary}
\newtheorem{lemma}[theorem]{Lemma}
\theoremstyle{definition} 
\newtheorem{definition}[theorem]{Definition}
\newtheorem{remark}[theorem]{Remark}
\newtheorem{example}[theorem]{Example}
\newcommand{\R}{\ensuremath{\mathbb{R}}}
\newcommand{\Z}{\ensuremath{\mathbb{Z}}}
\newcommand{\T}{\ensuremath{\mathbb{T}}}
\newcommand{\N}{\ensuremath{\mathbb{N}}}
\newcommand{\C}{\ensuremath{\mathbb{C}}}
\DeclareMathOperator{\Log}{Log}
\numberwithin{equation}{section}
\numberwithin{theorem}{section}
\begin{document}

\title{A Multi-Valued Logarithm on Time Scales}
\author[Anderson]{Douglas R. Anderson} 
\address{Department of Mathematics \\
         Concordia College \\
         Moorhead, Minnesota 56562 USA}
\email{andersod@cord.edu}
\author[Bohner]{Martin Bohner}
\address{Department of Mathematics and Statistics \\
Missouri University of Science and Technology \\
Rolla, Missouri 65409-0020 USA}
\email{bohner@mst.edu}

\keywords{Cylinder transformation; Logarithm; Time scales; Dynamic equations; Cayley transformation.}
\subjclass[2010]{34N05}

\begin{abstract} 
A new definition of a multi-valued logarithm on time scales is introduced for delta-differentiable functions that never vanish. 
This new logarithm arises naturally from the definition of the cylinder transformation that is also at the heart of the definition of exponential functions on time scales. This definition will lead to a logarithm function on arbitrary time scales with familiar and useful properties that previous definitions in the literature lacked.
\end{abstract}

\maketitle\thispagestyle{empty}

\begin{center}
Dedicated to Professor Allan C. Peterson, our mentor, colleague, and friend, on the occasion of his retirement after 51 years at the University of Nebraska-Lincoln.
\end{center}


\section{introduction}

A recurring open problem for dynamic equations on time scales \cite{bp,bp2} has been the following \cite{bohner}: Define a ``nice'' logarithm function on time scales and present its properties. The aim of this paper is to introduce on time scales a new multi-valued logarithm arising from the cylinder transformation employed in definitions of exponential functions for dynamic equations. 

The development of this logarithm on general time scales will proceed as follows. In Section \ref{newlog}, we extend the definition of the traditional single-valued cylinder transformation to a multi-valued cylinder transformation. This transformation has useful properties across the circle plus $(\oplus)$ and circle dot $(\odot)$ operations, and is the basis for the definition of the logarithm, for non-vanishing delta-differentiable functions. In Section \ref{logproperties}, nice properties of this new logarithm are shown to hold. Section \ref{nablacase} establishes a similar logarithm for the nabla case. In Section \ref{SectionCayley}, the Cayley cylinder transformation is also considered, and is shown to lead to the very same logarithm. Finally, in Section \ref{historylog}, we give a brief list of logarithm functions on time scales from the literature.
Numerous examples on various time scales are given throughout the paper to illustrate our results.

Throughout this paper, we will assume a working knowledge of time scales and time scales notation.


\section{a new logarithm on time scales}\label{newlog}

We begin our presentation of a new definition of a logarithm for dynamic equations on time scales with some motivation provided by the definition of exponential functions for dynamic equations based on the cylinder transformation. The following definition \cite[Definition 2.21]{bp} is the original cylinder transformation; a modified cylinder transformation will also be examined, in Section \ref{SectionCayley}.


\begin{definition}[Single-Valued Cylinder Transformation]
For $h>0$, define the cylinder transformation $\xi_h:\C_h\rightarrow\Z_h$ by
\begin{equation}\label{single-cylinder}
 \xi_h(z) = \begin{cases} \displaystyle\frac{1}{h}\Log(1+zh) &\text{for}\; h\ne 0 \\ z &\text{for}\; h=0, \end{cases} 
\end{equation}
where $\C$ is the set of complex numbers, 
\begin{equation}\label{chzhdefs}
 \C_h=\left\{z\in\C:\; z\ne -\frac{1}{h}\right\}, \qquad \Z_h=\left\{z\in\C:\; -\frac{\pi}{h}<\operatorname{Im}(z)\le \frac{\pi}{h}\right\},
\end{equation} 
and $\Log$ is the principal logarithm function.
\end{definition}

The following definition is \cite[Definition 2.25]{bp}.


\begin{definition}[Regressive Function]
A function $p:\T\rightarrow\R$ is regressive provided
\[ 1+\mu(t)p(t) \ne 0 \quad\text{for all}\quad t\in\T^{\kappa} \]
holds. The set of all regressive and rd-continuous functions $p:\T\rightarrow\R$ is denoted by $\mathcal{R}$.
\end{definition}

The following definition is \cite[Definition 2.30]{bp}.


\begin{definition}[Exponential Function]
For functions $p\in\mathcal{R}$, the exponential function on time scales is given by
\[ e_p(t,s)=\exp\left(\int_s^t \xi_{\mu(\tau)}(p(\tau))\Delta\tau\right) \quad\text{for}\quad s,t\in\T, \]
where $\xi_h(z)$ is the cylinder transformation given in \eqref{single-cylinder}.
\end{definition}

We now set the foundation for offering a new definition of logarithms on time scales. This definition will be of a multi-valued function, for which we need to modify the single-valued cylinder function given in \eqref{single-cylinder}. 


\begin{definition}[Multi-Valued Cylinder Transformation]
For $h>0$, define the multi-valued cylinder transformation $\zeta_h:\C_h\rightarrow\C$ by
\begin{equation}\label{cylinder}
 \zeta_h(z) = \begin{cases} \displaystyle\frac{1}{h}\log(1+zh) &\text{for}\; h\ne 0 \\ z &\text{for}\; h=0, \end{cases} 
\end{equation}
where $\C$ is the set of complex numbers, $\C_h$ is given in \eqref{chzhdefs}, and $\log$ is the multi-valued complex logarithm function.
\end{definition}


\begin{lemma}\label{ximuprop}
Let $f,g:\T\rightarrow\C$ be $\Delta$-differentiable functions with $f,g\ne 0$ on $\T$, and let the multi-valued cylinder transformation $\zeta$ be given by \eqref{cylinder}. Then, for fixed $\tau\in\T^{\kappa}$,
$$ \zeta_{\mu(\tau)}\left(\left(\frac{f^{\Delta}}{f}\oplus \frac{g^{\Delta}}{g}\right)(\tau)\right) = \zeta_{\mu(\tau)}\left(\frac{f^{\Delta}(\tau)}{f(\tau)}\right) + \zeta_{\mu(\tau)}\left(\frac{g^{\Delta}(\tau)}{g(\tau)}\right). $$
\end{lemma}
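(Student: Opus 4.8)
The plan is to reduce the identity to a single algebraic factorization of the complex arguments $1+zh$ together with the set-valued product rule for the multi-valued logarithm. First I would fix $\tau\in\T^{\kappa}$ and abbreviate $h=\mu(\tau)$, $a=f^{\Delta}(\tau)/f(\tau)$, and $b=g^{\Delta}(\tau)/g(\tau)$, so that the circle-plus operation reads $(a\oplus b)=a+b+hab$. The degenerate case $h=0$ (that is, $\tau$ right-dense) is immediate: there $a\oplus b=a+b$ and $\zeta_{0}$ is the identity, so both sides equal $a+b$.

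For $h\neq 0$ the heart of the argument is the factorization
\[
 1+h(a\oplus b)=1+ha+hb+h^{2}ab=(1+ha)(1+hb),
\]
which I would verify by direct expansion. Applying the definition \eqref{cylinder} then turns the left-hand side into $\tfrac{1}{h}\log\big((1+ha)(1+hb)\big)$ and the right-hand side into $\tfrac{1}{h}\big(\log(1+ha)+\log(1+hb)\big)$, where the addition of the two logarithm values is read as the Minkowski sum of subsets of $\C$.

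The key step, and the one place where the multi-valuedness is essential, is the set-valued product rule $\log(zw)=\log z+\log w$ for the multi-valued complex logarithm, applied with $z=1+ha$ and $w=1+hb$. This identity holds as an \emph{exact} equality of subsets of $\C$, whereas the principal logarithm $\Log$ satisfies it only modulo $2\pi i$; this is precisely why the multi-valued $\zeta$ rather than the single-valued $\xi$ must be used here. To invoke it I must first check that $z$ and $w$ are nonzero, i.e. that $a,b\in\C_{h}$. This follows from the simple useful formula $f^{\sigma}=f+\mu f^{\Delta}$, which gives $1+ha=f^{\sigma}(\tau)/f(\tau)$ and $1+hb=g^{\sigma}(\tau)/g(\tau)$; since $f$ and $g$ never vanish, both quotients are nonzero.

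Dividing the product-rule identity by $h$ then yields the claim. I expect the only genuine subtlety to be bookkeeping about the meaning of ``$=$'': throughout, the two sides are to be compared as multi-valued functions, i.e. as equal sets of complex numbers, and the whole content of the lemma is that the $2\pi i\Z$ ambiguities on the two sides match up exactly, which the multi-valued product rule guarantees.
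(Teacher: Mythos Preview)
Your argument is correct and follows essentially the same route as the paper: both hinge on the factorization $1+h(a\oplus b)=(1+ha)(1+hb)$ together with the set-valued product rule $\log(zw)=\log z+\log w$, and both use $f^{\sigma}=f+\mu f^{\Delta}$ to verify that the arguments are nonzero. The only cosmetic difference is that the paper first records the identity $\frac{f^{\Delta}}{f}\oplus\frac{g^{\Delta}}{g}=\frac{(fg)^{\Delta}}{fg}$ and passes through $\frac{(fg)^{\sigma}}{fg}=\frac{f^{\sigma}}{f}\cdot\frac{g^{\sigma}}{g}$, whereas you work directly with the algebraic definition of $\oplus$; the content is identical.
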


\begin{proof}
First, note that the simple useful formula $f^{\sigma}=f+\mu f^{\Delta}$ (suppressing the variable) implies 
\begin{eqnarray*}
\frac{(fg)^{\Delta}}{fg} 
&=& \frac{f^{\sigma}g^{\Delta}+f^{\Delta}g}{fg} \\
&=& \frac{(f+\mu f^{\Delta})g^{\Delta}}{fg} + \frac{f^{\Delta}}{f} \\
&=& \frac{f^{\Delta}}{f} + \frac{g^{\Delta}}{g} + \mu\frac{f^{\Delta} g^{\Delta}}{fg} \\
&=& \frac{f^{\Delta}}{f}\oplus \frac{g^{\Delta}}{g}.
\end{eqnarray*}
It follows that for fixed $\tau\in\T^{\kappa}$,
\begin{eqnarray*}
\lefteqn{\zeta_{\mu(\tau)}\left(\left(\frac{f^{\Delta}}{f}\oplus \frac{g^{\Delta}}{g}\right)(\tau)\right)} \\
&=& \zeta_{\mu(\tau)}\left(\frac{(fg)^{\Delta}(\tau)}{(fg)(\tau)}\right) \\
&=& \begin{cases} \displaystyle\frac{1}{\mu(\tau)}\log\left(1+\mu(\tau)\frac{(fg)^{\Delta}(\tau)}{(fg)(\tau)}\right) &\text{for}\; \mu(\tau)\ne 0 \\ \frac{(fg)^{\Delta}(\tau)}{(fg)(\tau)} &\text{for}\; \mu(\tau)=0 \end{cases} \\
&=& \begin{cases} \displaystyle\frac{1}{\mu(\tau)}\log\left(\frac{(fg)^{\sigma}(\tau)}{(fg)(\tau)}\right) &\text{for}\; \mu(\tau)\ne 0 \\ \left(\frac{f^{\Delta}}{f}\oplus \frac{g^{\Delta}}{g}\right)(\tau) &\text{for}\; \mu(\tau)=0 \end{cases} \\
&=& \begin{cases} \displaystyle\frac{1}{\mu(\tau)}\log\left(\frac{f^{\sigma}(\tau)}{f(\tau)}\right) + \displaystyle\frac{1}{\mu(\tau)}\log\left(\frac{g^{\sigma}(\tau)}{g(\tau)}\right) &\text{for}\; \mu(\tau)\ne 0 \\ \left(\frac{f^{\Delta}}{f} + \frac{g^{\Delta}}{g}\right)(\tau) &\text{for}\; \mu(\tau)=0 \end{cases} \\
&=& \begin{cases} \displaystyle\frac{1}{\mu(\tau)}\log\left(\frac{(f+\mu f^{\Delta})(\tau)}{f(\tau)}\right) + \displaystyle\frac{1}{\mu(\tau)}\log\left(\frac{(g+\mu g^{\Delta})(\tau)}{g(\tau)}\right) &\text{for}\; \mu(\tau)\ne 0 \\ \frac{f^{\Delta}(\tau)}{f(\tau)} + \frac{g^{\Delta}(\tau)}{g(\tau)} &\text{for}\; \mu(\tau)=0 \end{cases} \\
&=& \zeta_{\mu(\tau)}\left(\frac{f^{\Delta}(\tau)}{f(\tau)}\right) + \zeta_{\mu(\tau)}\left(\frac{g^{\Delta}(\tau)}{g(\tau)}\right).
\end{eqnarray*}
This completes the proof.
\end{proof}


\begin{lemma}\label{zetamuprop}
Let $\alpha\in\R$, and let $p:\T\rightarrow\C$ be a $\Delta$-differentiable function with $p\ne 0$ on $\T$. For the multi-valued cylinder transformation $\zeta$ given by \eqref{cylinder} and for fixed $\tau\in\T^{\kappa}$,
$$ \zeta_{\mu(\tau)}\left(\left(\alpha\odot\frac{p^{\Delta}}{p}\right)(\tau)\right) = \alpha\zeta_{\mu(\tau)}\left(\frac{p^{\Delta}(\tau)}{p(\tau)}\right). $$
\end{lemma}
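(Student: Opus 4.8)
The plan is to mimic the proof of Lemma~\ref{ximuprop}, replacing the product-to-sum rule of the logarithm by its power-to-multiple rule, and, as there, to split the argument according to whether $\mu(\tau)=0$. Writing $q:=p^{\Delta}/p$ throughout, the case $\mu(\tau)=0$ is immediate: $\zeta_{0}$ is the identity and $\odot$ reduces to ordinary scalar multiplication, so both sides collapse to $\alpha\,q(\tau)$.

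For $\mu(\tau)\neq 0$ I would first record two structural identities. The relation $p^{\sigma}=p+\mu p^{\Delta}$ (used at the very start of Lemma~\ref{ximuprop}) gives $1+\mu q = p^{\sigma}/p$. The circle-dot operation is defined so that $\alpha\odot q = \frac{1}{\mu}\left((1+\mu q)^{\alpha}-1\right)$, whence $1+\mu(\alpha\odot q) = (1+\mu q)^{\alpha} = (p^{\sigma}/p)^{\alpha}$. Substituting into \eqref{cylinder} then yields
\[ \zeta_{\mu(\tau)}\left(\left(\alpha\odot q\right)(\tau)\right) = \frac{1}{\mu(\tau)}\log\left(\left(\frac{p^{\sigma}(\tau)}{p(\tau)}\right)^{\alpha}\right). \]

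The crux is the power rule $\log(w^{\alpha})=\alpha\log w$ for the multi-valued complex logarithm, applied with $w=p^{\sigma}(\tau)/p(\tau)$, which should collapse the right-hand side to $\frac{\alpha}{\mu(\tau)}\log(p^{\sigma}(\tau)/p(\tau)) = \alpha\,\zeta_{\mu(\tau)}(q(\tau))$, the claimed identity. This rule is the $\odot$-counterpart of the product rule $\log(ab)=\log a+\log b$ that drove Lemma~\ref{ximuprop}, and I expect it, rather than any of the algebra, to be the main obstacle. The delicate point is the $2\pi i\Z$ branch ambiguity carried by $\log$: since the power $(p^{\sigma}/p)^{\alpha}$ is naturally formed from the principal logarithm $\Log$, whereas $\zeta$ is built from the full multi-valued $\log$, I would need to pin down the precise sense in which $\log\!\left(\exp(\alpha\Log w)\right)$ agrees with $\alpha\log w$ as multi-valued expressions. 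Tracking this branch bookkeeping is where the real work lies; it is also exactly the feature that makes the multi-valued $\zeta_{h}$ preferable here to the single-valued $\xi_{h}$, for which the analogous identity can hold only modulo $2\pi i/\mu$.
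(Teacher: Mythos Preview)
Your proposal follows essentially the same approach as the paper: both split on whether $\mu(\tau)=0$, use the identity $1+\mu(\alpha\odot q)=(1+\mu q)^{\alpha}$ (the paper cites \cite[Theorem~2.43]{bp2} for it, you derive it from the definition of $\odot$), and then invoke the power rule $\log(w^{\alpha})=\alpha\log w$ for the multi-valued logarithm. Your detour through $p^{\sigma}/p$ is harmless but unnecessary, and the paper simply applies the power rule without any of the branch-bookkeeping you flag as the delicate step.
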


\begin{proof}
Let $\alpha\in\R$, and let $p:\T\rightarrow\C$ be a $\Delta$-differentiable function with $p\ne 0$ on $\T$. Then \cite[Theorem 2.43]{bp2} yields
$$ 1+\mu(\alpha\odot f) = (1+\mu f)^{\alpha} $$
on $\T^{\kappa}$ for $f=\frac{p^{\Delta}}{p}$. It follows that for fixed $\tau\in\T^{\kappa}$,
\begin{eqnarray*}
\lefteqn{\zeta_{\mu(\tau)}\left(\left(\alpha\odot\frac{p^{\Delta}}{p}\right)(\tau)\right)} \\
&=& \begin{cases} \displaystyle\frac{1}{\mu(\tau)}\log\left(1+\mu(\tau)\left(\alpha\odot\frac{p^{\Delta}}{p}\right)(\tau)\right) &\text{for}\; \mu(\tau)\ne 0 \\ \left(\alpha\odot\frac{p^{\Delta}}{p}\right)(\tau) &\text{for}\; \mu(\tau)=0 \end{cases} \\
&=& \begin{cases} \displaystyle\frac{1}{\mu(\tau)}\log\left(1+\mu(\tau)\frac{p^{\Delta}(\tau)}{p(\tau)}\right)^{\alpha} &\text{for}\; \mu(\tau)\ne 0 \\ \alpha\frac{p^{\Delta}(\tau)}{p(\tau)} &\text{for}\; \mu(\tau)=0 \end{cases} \\
&=& \alpha\begin{cases} \displaystyle\frac{1}{\mu(\tau)}\log\left(1+\mu(\tau)\frac{p^{\Delta}(\tau)}{p(\tau)}\right) &\text{for}\; \mu(\tau)\ne 0 \\ \frac{p^{\Delta}(\tau)}{p(\tau)} &\text{for}\; \mu(\tau)=0 \end{cases} \\
&=& \alpha\zeta_{\mu(\tau)}\left(\frac{p^{\Delta}(\tau)}{p(\tau)}\right).
\end{eqnarray*}
This completes the proof.
\end{proof}


\begin{definition}[Logarithm Function]
For a $\Delta$-differentiable function $p:\T\rightarrow\C$ with $p\ne 0$ on $\T$, the multi-valued logarithm function on time scales is given by
\[ \ell_p(t,s) = \int_s^t \zeta_{\mu(\tau)}\left(\frac{p^{\Delta}(\tau)}{p(\tau)}\right)\Delta\tau \quad\text{for}\quad s,t\in\T, \]
where $\zeta_h(z)$ is the multi-valued cylinder transformation given in \eqref{cylinder}. Define the principal logarithm on time scales to be
\[ L_p(t,s) = \int_s^t \xi_{\mu(\tau)}\left(\frac{p^{\Delta}(\tau)}{p(\tau)}\right)\Delta\tau \quad\text{for}\quad s,t\in\T, \]
where $\xi_h(z)$ is the single-valued cylinder transformation given in \eqref{single-cylinder}.
\end{definition}


\begin{remark}
According to this definition, if $p\equiv\;$constant, then $\ell_p(t,s)=0$ for all $t,s\in\T$. Thus, this logarithm does not distinguish between either constants or constant multiples of functions. We moreover note here that even in the case $\T=\R$, the dynamics along the negative and positive real line necessitate the existence of a logarithm with principal and multiple values, making a multi-valued logarithm on general time scales both natural and expected, though heretofore unexplored.
\end{remark}


\begin{example}
For $\T=\R$, 
\[ \ell_p(t,s) = \int_s^t \zeta_{\mu(\tau)}\left(\frac{p^{\Delta}(\tau)}{p(\tau)}\right)\Delta\tau = \int_s^t \frac{p'(\tau)}{p(\tau)}{\rm d}\tau = \log\left(\frac{p(t)}{p(s)}\right), \]
where $\log$ is the multi-valued complex logarithm function.
For $\T=h\Z$, 
$$f^{\Delta}(\tau)=\Delta_hf(\tau)=\frac{f(\tau+h)-f(\tau)}{h}$$
and
\begin{eqnarray*}
\ell_p(t,s) 
&=& \int_s^t \zeta_{\mu(\tau)}\left(\frac{p^{\Delta}(\tau)}{p(\tau)}\right)\Delta\tau = \sum_{j=\frac{s}{h}}^{\frac{t}{h}-1} \zeta_{h}\left(\frac{\Delta_h p(jh)}{p(jh)}\right)h = \sum_{j=\frac{s}{h}}^{\frac{t}{h}-1} \frac{1}{h}\log\left(1+\frac{h\Delta_h p(jh)}{p(jh)}\right)h \\
&=& \sum_{j=\frac{s}{h}}^{\frac{t}{h}-1} \log\left(\frac{p(jh+h)}{p(jh)}\right) = \log\left(\prod_{j=\frac{s}{h}}^{\frac{t}{h}-1} \frac{p((j+1)h)}{p(jh)}\right) = \log\left(\frac{p(t)}{p(s)}\right).
\end{eqnarray*}
For $\T=q^{\N_0}$, 
$$f^{\Delta}(\tau)=\frac{f(q\tau)-f(\tau)}{(q-1)\tau}$$
and
\begin{eqnarray*}
	\ell_p(t,s) 
	&=& \int_s^t \zeta_{\mu(\tau)}\left(\frac{p^{\Delta}(\tau)}{p(\tau)}\right)\Delta\tau \\
	&=& \sum_{\tau\in[s,t)} \zeta_{(q-1)\tau}\left(\frac{p^{\Delta}(\tau)}{p(\tau)}\right)(q-1)\tau \\
	&=& \sum_{\tau\in[s,t)} \frac{1}{(q-1)\tau}\log\left(1+\frac{(q-1)\tau  p^{\Delta}(\tau)}{p(\tau)}\right)(q-1)\tau \\
	&=& \sum_{\tau\in[s,t)} \log\left(\frac{p(q\tau)}{p(\tau)}\right) \\
	&=& \log\left(\frac{p(t)}{p(s)}\right).
\end{eqnarray*}
This ends the example.
\end{example}


\begin{example}
For $a,b,c,d\in\R$ with $a<b<c<d$, let $\T=[a,b]\cup[c,d]$. Assume $p:\T\rightarrow\C$ is differentiable with $p\ne 0$ on $\T$. If $s,t\in[a,b)$ or $s,t\in[c,d]$, then $\mu(\tau)\equiv 0$ for $\tau\in[s,t]$, so that by the definition of the multi-valued cylinder function \eqref{cylinder},
$$ \ell_p(t,s) = \int_s^t \frac{p'(\tau)}{p(\tau)}{\rm d}\tau = \log\left(\frac{p(t)}{p(s)}\right). $$
Assume without loss of generality that $s\in[a,b]$ and $t\in[c,d]$. Then $c=\sigma(b)$, and
\begin{eqnarray*}
  \ell_p(t,s) &=& \int_s^t \zeta_{\mu(\tau)}\left(\frac{p^{\Delta}(\tau)}{p(\tau)}\right)\Delta\tau \\
	&=& \left(\int_s^b + \int_b^{\sigma(b)} + \int_{\sigma(b)}^t \right) \zeta_{\mu(\tau)}\left(\frac{p^{\Delta}(\tau)}{p(\tau)}\right)\Delta\tau \\
	&=& \log\left(\frac{p(b)}{p(s)}\right) + \log\left(\frac{p(t)}{p(\sigma(b))}\right) + \int_b^{\sigma(b)}\zeta_{\mu(\tau)}\left(\frac{p^{\Delta}(\tau)}{p(\tau)}\right)\Delta\tau \\
	&=& \log\left(\frac{p(b)}{p(s)}\right) + \log\left(\frac{p(t)}{p(c)}\right) + \mu(b)\zeta_{\mu(b)}\left(\frac{p^{\Delta}(b)}{p(b)}\right) \\
	&=& \log\left(\frac{p(b)}{p(s)}\right) + \log\left(\frac{p(t)}{p(c)}\right) + \mu(b)\left[\frac{1}{\mu(b)}\log\left(1+\frac{\mu(b)p^{\Delta}(b)}{p(b)}\right)\right] \\
	&=& \log\left(\frac{p(b)}{p(s)}\right) + \log\left(\frac{p(t)}{p(c)}\right) + \log\left(\frac{p^{\sigma}(b)}{p(b)}\right) \\
	&=& \log\left(\frac{p(b)}{p(s)}\right) + \log\left(\frac{p(t)}{p(c)}\right) + \log\left(\frac{p(c)}{p(b)}\right) \\
	&=& \log\left(\frac{p(t)}{p(s)}\right).
\end{eqnarray*}
Consequently, in all cases, we see that $\ell_p(t,s) = \log\left(\frac{p(t)}{p(s)}\right)$ on this time scale as well. 
\end{example}


\begin{example}
Let $\T=(-\infty,-4]\cup[2,\infty)$, and $p(t)=t^3$. Let $t=3$ and $s=-5$. Then 
$$ \mu(-4) = \sigma(-4)-(-4) = 2-(-4) = 6, $$ 
and the principal logarithm on this time scale is
\begin{eqnarray*}
L_p(t,s) &=& L_{p}(3,-5) \\
&=& \int_{-5}^{3} \xi_{\mu(\tau)}\left(\frac{(\tau^3)^{\Delta}}{\tau^3}\right)\Delta\tau \\
&=& \left(\int_{-5}^{-4}+\int_{-4}^{2}+\int_{2}^{3}\right)\xi_{\mu(\tau)}\left(\frac{\sigma(\tau)^2+\tau\sigma(\tau)+\tau^2}{\tau^3}\right)\Delta\tau \\
&=& 3\left(\int_{-5}^{-4}+\int_{2}^{3}\right) \frac{{\rm d}\tau}{\tau} +\mu(-4) \xi_{\mu(-4)}\left(\frac{2^2-4(2)+(-4)^2}{(-4)^3}\right) \\
&=& 3\left(\Log[-4]-\Log[-5]+\Log[3]-\Log[2]\right) + \Log\left(1+6\frac{12}{-64}\right) \\
&=& \Log\left(\frac{27}{-125}\right) \\
&=& \ln\left(\frac{27}{125}\right)+i\pi,
\end{eqnarray*}
where $\Log$ is the principal complex logarithm, and $\ln$ is the natural logarithm.
\end{example}


\section{properties of the logarithm}\label{logproperties}

Using the definition of the multi-valued logarithm on time scales given above, we establish the following properties. 


\begin{theorem}
Let $p:\T\rightarrow\C$ be a $\Delta$-differentiable function with $p\ne 0$ on $\T$. Then, for $s,t\in\T$, we have
$$\exp\left(L_{p}(t,s)\right)=e_{\frac{p^{\Delta}}{p}}(t,s).$$
\end{theorem}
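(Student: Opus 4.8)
The plan is to recognize that the identity is, at bottom, a matching of definitions: exponentiating the principal logarithm $L_p(t,s)$ ought to reproduce the exponential function of the regressive coefficient $p^{\Delta}/p$, because both objects are built from the \emph{same} integrand $\xi_{\mu(\tau)}(p^{\Delta}(\tau)/p(\tau))$. Explicitly, the definition of the principal logarithm gives
$$\exp\left(L_p(t,s)\right) = \exp\left(\int_s^t \xi_{\mu(\tau)}\left(\frac{p^{\Delta}(\tau)}{p(\tau)}\right)\Delta\tau\right),$$
while the definition of the exponential function applied with $q = p^{\Delta}/p$ gives
$$e_{p^{\Delta}/p}(t,s) = \exp\left(\int_s^t \xi_{\mu(\tau)}(q(\tau))\Delta\tau\right),$$
so the two right-hand sides agree term-by-term. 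The only genuine content is therefore to confirm that $q = p^{\Delta}/p$ is an admissible argument, that is, $q \in \mathcal{R}$, so that $e_{p^{\Delta}/p}(t,s)$ is well-defined in the first place.

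The first step I would carry out is the regressivity check. Using the product-form identity $p^{\sigma} = p + \mu p^{\Delta}$ already invoked in the proof of Lemma \ref{ximuprop}, I compute
$$1 + \mu(t)\,\frac{p^{\Delta}(t)}{p(t)} = \frac{p(t) + \mu(t)p^{\Delta}(t)}{p(t)} = \frac{p^{\sigma}(t)}{p(t)}.$$
Since $p$ never vanishes on $\T$ and $\sigma(t)\in\T$ for every $t\in\T^{\kappa}$, the numerator $p^{\sigma}(t) = p(\sigma(t))$ is nonzero, so $1+\mu(t)p^{\Delta}(t)/p(t)\neq 0$ on $\T^{\kappa}$; equivalently, $p^{\Delta}(t)/p(t)\neq -1/\mu(t)$, so $q$ is regressive and, as a bonus, its value always lies in the domain $\C_{\mu(\tau)}$ on which $\xi_{\mu(\tau)}$ is defined. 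The rd-continuity needed for membership in $\mathcal{R}$ is inherited from the standing regularity of $p$, whence $q\in\mathcal{R}$.

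With $q\in\mathcal{R}$ in hand, the final step is merely to substitute $q = p^{\Delta}/p$ into the definition of $e_q(t,s)$ and read off that its exponent is exactly $L_p(t,s)$, yielding $\exp(L_p(t,s)) = e_{p^{\Delta}/p}(t,s)$. I do not anticipate any real obstacle: conceptually the statement reinterprets the classical fact that $e_q$ solves $y^{\Delta}=q\,y$ with $y(s)=1$, specialized to the coefficient $q=p^{\Delta}/p$ (for which the solution is $p(t)/p(s)$). The one place that truly uses the hypothesis is the computation $1+\mu p^{\Delta}/p = p^{\sigma}/p$ above, since a vanishing $p^{\sigma}$ would leave the exponential undefined; a minor caveat worth flagging is that the quoted definitions of regressivity and of $e_q$ are phrased for real-valued coefficients, whereas here $q$ may be complex, but these definitions carry over verbatim to the complex-valued setting used throughout.
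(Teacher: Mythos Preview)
Your proposal is correct and follows essentially the same approach as the paper: write out the definition of $L_p(t,s)$, exponentiate, and identify the result with the defining expression for $e_{p^{\Delta}/p}(t,s)$. Your proof is in fact more thorough, since you explicitly verify the regressivity of $p^{\Delta}/p$ via $1+\mu p^{\Delta}/p = p^{\sigma}/p \neq 0$, a point the paper's proof leaves implicit.
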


\begin{proof}
Let $p:\T\rightarrow\C$ be a $\Delta$-differentiable function with $p\ne 0$ on $\T$. Then, for $s,t\in\T$, we have
\begin{eqnarray*}
L_{p}(t,s) &=& \int_s^t \xi_{\mu(\tau)}\left(\frac{p^{\Delta}(\tau)}{p(\tau)}\right)\Delta\tau.
\end{eqnarray*}
Now exponentiate both sides and use the definition of the exponential function $e_p(t,s)$.
\end{proof}


\begin{corollary}
Let $p\in\mathcal{R}$ and $s,t\in\T$. Then
$$\exp\left(L_{e_p}(t,s)\right)=e_p(t,s).$$
\end{corollary}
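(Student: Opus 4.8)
The plan is to obtain this corollary as an immediate specialization of the preceding theorem, applied to the particular function $q := e_p(\cdot,s)$, that is, the single-variable function $t \mapsto e_p(t,s)$ for the fixed base point $s$. That theorem asserts $\exp\left(L_q(t,s)\right) = e_{q^{\Delta}/q}(t,s)$ for every $\Delta$-differentiable, nonvanishing $q$, so the whole task reduces to checking that $q$ satisfies these hypotheses and that its logarithmic derivative $q^{\Delta}/q$ equals $p$. The only genuine content beyond the theorem is the invocation of regressivity, so I would lead with that.

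First I would record the two standard facts about the time scales exponential that make $q$ admissible. Since $p\in\mathcal{R}$ is regressive, the exponential $e_p(\cdot,s)$ never vanishes on $\T$; this is exactly what guarantees that the logarithm $L_{e_p}(t,s) = L_q(t,s)$ is well defined, and it is the sole place where the hypothesis $p\in\mathcal{R}$, rather than mere $\Delta$-differentiability, is actually needed. Second, $q$ is $\Delta$-differentiable and solves the dynamic equation $q^{\Delta} = p\,q$ with $q(s) = e_p(s,s) = 1$, since this initial value problem is precisely the defining property of $e_p(\cdot,s)$. Dividing $q^{\Delta} = p\,q$ by $q$, which is legitimate because $q\ne 0$, then yields $q^{\Delta}/q = p$ on $\T^{\kappa}$.

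Substituting this identity into the conclusion of the theorem gives $\exp\left(L_{e_p}(t,s)\right) = \exp\left(L_q(t,s)\right) = e_{q^{\Delta}/q}(t,s) = e_p(t,s)$, which is the claim. I expect no serious obstacle: the step requiring the most care is interpretational rather than computational, namely confirming that $L_{e_p}(t,s)$ is to be read as $L$ applied to the single-variable function $e_p(\cdot,s)$, and verifying that this function meets the nonvanishing requirement built into the definition of $L$. Once regressivity supplies that nonvanishing, the corollary is a one-line substitution into the theorem, and one may note that the logarithmic derivative $q^{\Delta}/q = p$ is independent of the chosen base point, so the argument is insensitive to any ambiguity in which base point is meant.
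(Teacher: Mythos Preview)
Your proposal is correct and is exactly the specialization the paper intends: the corollary is stated without proof, as an immediate consequence of the preceding theorem applied to $q=e_p(\cdot,s)$, using that regressivity forces $e_p(\cdot,s)\ne 0$ and that $q^{\Delta}/q=p$. Your discussion of the base-point ambiguity is a nice clarification but is not needed for the argument.
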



\begin{theorem}[Logarithm of Product \& Quotient]
Let $f,g:\T\rightarrow\C$ be $\Delta$-differentiable functions with $f,g\ne 0$ on $\T$. Then, for $s,t\in\T$, we have
$$ \ell_{fg}(t,s) =  \ell_{f}(t,s) +  \ell_{g}(t,s) $$
and
$$ \ell_{\frac{f}{g}}(t,s) =  \ell_{f}(t,s) -  \ell_{g}(t,s). $$
\end{theorem}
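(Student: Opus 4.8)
The plan is to reduce both identities to Lemma \ref{ximuprop} together with the additivity of the delta integral, so that essentially no new computation is required. For the product formula I would begin from the definition
$$ \ell_{fg}(t,s) = \int_s^t \zeta_{\mu(\tau)}\left(\frac{(fg)^{\Delta}(\tau)}{(fg)(\tau)}\right)\Delta\tau, $$
and invoke the identity $\frac{(fg)^{\Delta}}{fg} = \frac{f^{\Delta}}{f}\oplus \frac{g^{\Delta}}{g}$ already established in the opening lines of the proof of Lemma \ref{ximuprop} via $f^{\sigma}=f+\mu f^{\Delta}$. This rewrites the integrand as $\zeta_{\mu(\tau)}\!\left(\left(\frac{f^{\Delta}}{f}\oplus\frac{g^{\Delta}}{g}\right)(\tau)\right)$, to which Lemma \ref{ximuprop} applies pointwise in $\tau$, producing the sum $\zeta_{\mu(\tau)}(f^{\Delta}(\tau)/f(\tau)) + \zeta_{\mu(\tau)}(g^{\Delta}(\tau)/g(\tau))$. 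Splitting the integral of this sum by linearity of the delta integral then yields exactly $\ell_f(t,s) + \ell_g(t,s)$.

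For the quotient formula, rather than computing $(f/g)^{\Delta}/(f/g)$ and introducing a $\ominus$ operation, I would exploit the product formula just obtained. Writing $f = (f/g)\cdot g$ and observing that $f/g$ is $\Delta$-differentiable (by the quotient rule, since $g\ne 0$) and nowhere zero (since $f\ne 0$), so that $\ell_{f/g}$ is defined, the product formula gives $\ell_f(t,s) = \ell_{f/g}(t,s) + \ell_g(t,s)$. Rearranging this equality yields $\ell_{f/g}(t,s) = \ell_f(t,s) - \ell_g(t,s)$, as claimed.

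I do not anticipate a serious obstacle, since Lemma \ref{ximuprop} carries all of the analytic weight and the remaining work is bookkeeping. Two points deserve care. First, because $\zeta$ is multi-valued, each $\ell_p(t,s)$ must be read either as a set of values or as a choice of branch along the integration path, and the two displayed equalities interpreted accordingly; the pointwise identity of Lemma \ref{ximuprop} propagates correctly through the integral precisely because the delta integral is additive in its integrand. Second, one must confirm that $fg$ and $f/g$ meet the standing hypotheses of $\Delta$-differentiability and non-vanishing so that $\ell_{fg}$ and $\ell_{f/g}$ are well defined; both follow at once from the product and quotient rules together with $f,g\ne 0$. The quotient identity then comes for free from the product identity, which is the chief economy of this approach.
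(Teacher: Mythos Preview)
Your proof of the product identity is exactly the paper's argument: both invoke the computation $\frac{(fg)^{\Delta}}{fg}=\frac{f^{\Delta}}{f}\oplus\frac{g^{\Delta}}{g}$ from the proof of Lemma~\ref{ximuprop}, apply the lemma pointwise, and split the integral.

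For the quotient identity the two approaches diverge. The paper argues ``in a similar manner'' by computing $\frac{(f/g)^{\Delta}}{f/g}=\frac{f^{\Delta}}{f}\ominus\frac{g^{\Delta}}{g}$ and then using a subtraction analogue of Lemma~\ref{ximuprop} (implicitly, $\zeta_{\mu}(a\ominus b)=\zeta_{\mu}(a)-\zeta_{\mu}(b)$) to separate the integrand. You instead derive the quotient formula as an immediate corollary of the product formula via $f=(f/g)\cdot g$, checking that $f/g$ is $\Delta$-differentiable and nonvanishing so that $\ell_{f/g}$ is defined. Your route is more economical: it avoids establishing (or tacitly assuming) the $\ominus$-version of Lemma~\ref{ximuprop}, and it makes transparent that the quotient rule is not an independent fact but a rearrangement of the product rule. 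The paper's route, on the other hand, keeps the two statements on a symmetric footing and displays the $\ominus$ structure explicitly. Both are correct; yours requires strictly less new input.
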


\begin{proof}
Let $f,g:\T\rightarrow\R$ be $\Delta$-differentiable functions with $f,g\ne 0$ on $\T$. Then, for $s,t\in\T$, we have via Lemma \ref{ximuprop} and its proof that
\begin{eqnarray*}
\ell_{fg}(t,s) &=& \int_s^t \zeta_{\mu(\tau)}\left(\frac{(fg)^{\Delta}(\tau)}{(fg)(\tau)}\right)\Delta\tau \\
&=&  \int_s^t \zeta_{\mu(\tau)}\left(\left(\frac{f^{\Delta}}{f}\oplus \frac{g^{\Delta}}{g}\right)(\tau)\right)\Delta\tau \\
&=&  \int_s^t \zeta_{\mu}\left(\frac{f^{\Delta}(\tau)}{f(\tau)}\right)\Delta\tau +  \int_s^t \zeta_{\mu}\left(\frac{g^{\Delta}(\tau)}{g(\tau)}\right)\Delta\tau \\
&=& \ell_{f}(t,s) + \ell_{g}(t,s).
\end{eqnarray*}
In a similar manner,
\begin{eqnarray*}
\ell_{\frac{f}{g}}(t,s) &=& \int_s^t \zeta_{\mu(\tau)}\left(\frac{\left(\frac{f}{g}\right)^{\Delta}(\tau)}{\left(\frac{f}{g}\right)(\tau)}\right)\Delta\tau \\
&=&  \int_s^t \zeta_{\mu(\tau)}\left(\left(\frac{f^{\Delta}}{f}\ominus \frac{g^{\Delta}}{g}\right)(\tau)\right)\Delta\tau \\
&=&  \int_s^t \zeta_{\mu}\left(\frac{f^{\Delta}(\tau)}{f(\tau)}\right)\Delta\tau -  \int_s^t \zeta_{\mu}\left(\frac{g^{\Delta}(\tau)}{g(\tau)}\right)\Delta\tau \\
&=& \ell_{f}(t,s) - \ell_{g}(t,s).
\end{eqnarray*}
This completes the proof.
\end{proof}


\begin{theorem}
Let $\alpha\in\R$, and let $p:\T\rightarrow\C$ be a $\Delta$-differentiable function with $p\ne 0$ on $\T$. Then, for $s,t\in\T$, we have
$$ \ell_{p^{\alpha}}(t,s) =  \alpha\ell_{p}(t,s). $$
\end{theorem}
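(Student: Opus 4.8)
The plan is to reduce the statement to the pointwise homogeneity already established in Lemma \ref{zetamuprop}, in direct analogy with how the product/quotient theorem was reduced to Lemma \ref{ximuprop}. By definition,
$$\ell_{p^{\alpha}}(t,s) = \int_s^t \zeta_{\mu(\tau)}\left(\frac{(p^{\alpha})^{\Delta}(\tau)}{p^{\alpha}(\tau)}\right)\Delta\tau,$$
so the crux is to recognize the argument of $\zeta_{\mu(\tau)}$ as $\left(\alpha\odot\frac{p^{\Delta}}{p}\right)(\tau)$. Once this is in hand, Lemma \ref{zetamuprop} converts the integrand into $\alpha\,\zeta_{\mu(\tau)}\!\left(\frac{p^{\Delta}(\tau)}{p(\tau)}\right)$, and pulling the real constant $\alpha$ out of the delta integral finishes the argument.

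Accordingly, the first step I would carry out is to verify the pointwise identity
$$\frac{(p^{\alpha})^{\Delta}}{p^{\alpha}} = \alpha\odot\frac{p^{\Delta}}{p} \quad\text{on}\quad \T^{\kappa}.$$
At a right-scattered point ($\mu\ne 0$) I would combine $p^{\sigma}=p+\mu p^{\Delta}$ with multiplicativity of powers to write $(p^{\alpha})^{\sigma}/p^{\alpha} = (p^{\sigma}/p)^{\alpha} = \left(1+\mu\frac{p^{\Delta}}{p}\right)^{\alpha}$, whence the delta quotient becomes
$$\frac{(p^{\alpha})^{\Delta}}{p^{\alpha}} = \frac{1}{\mu}\left(\frac{(p^{\alpha})^{\sigma}}{p^{\alpha}}-1\right) = \frac{1}{\mu}\left(\left(1+\mu\frac{p^{\Delta}}{p}\right)^{\alpha}-1\right),$$
which is exactly $\alpha\odot\frac{p^{\Delta}}{p}$ by the defining relation $1+\mu(\alpha\odot f)=(1+\mu f)^{\alpha}$ already invoked in the proof of Lemma \ref{zetamuprop}. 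At a right-dense point ($\mu=0$) the ordinary chain rule gives $(p^{\alpha})^{\Delta}/p^{\alpha}=\alpha\,p^{\Delta}/p$, which agrees with $\alpha\odot\frac{p^{\Delta}}{p}$ since the circle dot degenerates to scalar multiplication there.

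I expect the main obstacle to lie in the treatment of $p^{\alpha}$ itself: for complex-valued $p$ and real exponent $\alpha$ the power $p^{\alpha}=\exp(\alpha\log p)$ is genuinely multi-valued, so the step $(p^{\sigma}/p)^{\alpha}$ and the dense-point chain rule must be read as equalities of multi-valued expressions, or up to a consistent choice of branch. Since $\zeta$ is itself multi-valued, this causes no real difficulty for the conclusion — both sides of $\ell_{p^{\alpha}}(t,s)=\alpha\,\ell_{p}(t,s)$ are understood as multi-valued — but I would state explicitly that the branch ambiguity in $p^{\alpha}$ is absorbed into the multi-valued logarithm, so that the identity holds as sets of values. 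With the pointwise identity secured, the remaining work is the routine substitution into the definition followed by linearity of the delta integral.
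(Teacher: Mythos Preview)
Your approach is essentially identical to the paper's: both reduce to Lemma \ref{zetamuprop} after identifying $\frac{(p^{\alpha})^{\Delta}}{p^{\alpha}} = \alpha\odot\frac{p^{\Delta}}{p}$, then pull $\alpha$ through the delta integral. The only difference is that the paper cites this identity from \cite[Theorem 2.37]{bp2} rather than verifying it case-by-case as you do, and the paper does not pause to discuss the branch ambiguity in $p^{\alpha}$.
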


\begin{proof}
For the multi-valued cylinder transformation $\zeta$ given by \eqref{cylinder} and for fixed $\tau\in\T^{\kappa}$,
$$ \zeta_{\mu(\tau)}\left(\left(\alpha\odot\frac{p^{\Delta}}{p}\right)(\tau)\right) = \alpha\zeta_{\mu(\tau)}\left(\frac{p^{\Delta}(\tau)}{p(\tau)}\right) $$
using Lemma \ref{zetamuprop}. Moreover, by \cite[Theorem 2.37]{bp2}, we have
$$ \frac{\left(p^{\alpha}\right)^{\Delta}}{p^{\alpha}} = \alpha\odot \frac{p^{\Delta}}{p}. $$
Consequently,
\begin{eqnarray*}
\ell_{p^{\alpha}}(t,s) &=& \int_s^t \zeta_{\mu(\tau)}\left(\frac{\left(p^{\alpha}\right)^{\Delta}(\tau)}{p^{\alpha}(\tau)}\right)\Delta\tau \\
&=& \int_s^t \zeta_{\mu(\tau)}\left(\left(\alpha\odot \frac{p^{\Delta}}{p}\right)(\tau)\right)\Delta\tau \\
&=& \int_s^t \alpha\zeta_{\mu(\tau)}\left(\frac{p^{\Delta}(\tau)}{p(\tau)}\right)\Delta\tau \\
&=& \alpha\ell_{p}(t,s).
\end{eqnarray*}
This ends the proof.
\end{proof}


\begin{theorem}
Let $p:\T\rightarrow\R$ be a $\Delta$-differentiable function with $p\ne 0$ on $\T$. Then, for $s,t\in\T$, we have
$$ \ell_{p}^{\Delta}(t,s)=\begin{cases} \frac{1}{\mu(t)}\log\left(\frac{p^{\sigma}(t)}{p(t)}\right) &\text{for}\; \mu(t)\ne 0 \\ \frac{p^{\Delta}(t)}{p(t)} &\text{for}\; \mu(t)=0, \end{cases} $$
where $\Delta$-differentiation is with respect to $t$.
\end{theorem}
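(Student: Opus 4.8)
The plan is to recognize this statement as nothing more than the Fundamental Theorem of Calculus for the time scales $\Delta$-integral, followed by a direct unpacking of the definition of $\zeta$. Since
$$ \ell_p(t,s) = \int_s^t \zeta_{\mu(\tau)}\left(\frac{p^{\Delta}(\tau)}{p(\tau)}\right)\Delta\tau, $$
the map $t \mapsto \ell_p(t,s)$ is a $\Delta$-antiderivative of the integrand $g(\tau) := \zeta_{\mu(\tau)}\bigl(p^{\Delta}(\tau)/p(\tau)\bigr)$. First I would invoke the time scales FTC to conclude that, for each fixed $t\in\T^{\kappa}$,
$$ \ell_p^{\Delta}(t,s) = \zeta_{\mu(t)}\left(\frac{p^{\Delta}(t)}{p(t)}\right), $$
where the differentiation is with respect to $t$ and $s$ is held fixed. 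This single identity is the entire analytic content of the theorem; everything that follows is algebra.

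The remaining step is to substitute the definition \eqref{cylinder} of the multi-valued cylinder transformation and split into the two cases. For $\mu(t)=0$ the claim is immediate, since $\zeta_0(z)=z$ gives $\ell_p^{\Delta}(t,s)=p^{\Delta}(t)/p(t)$. For $\mu(t)\ne 0$, the definition yields
$$ \zeta_{\mu(t)}\left(\frac{p^{\Delta}(t)}{p(t)}\right) = \frac{1}{\mu(t)}\log\left(1+\mu(t)\frac{p^{\Delta}(t)}{p(t)}\right), $$
and I would then apply the elementary identity $p^{\sigma}=p+\mu p^{\Delta}$ (used already in the proof of Lemma~\ref{ximuprop}) to rewrite the argument of the logarithm as
$$ 1+\mu(t)\frac{p^{\Delta}(t)}{p(t)} = \frac{p(t)+\mu(t)p^{\Delta}(t)}{p(t)} = \frac{p^{\sigma}(t)}{p(t)}, $$
which is exactly the stated right-hand side. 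Combining the two cases completes the proof.

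The only genuine obstacle is justifying that the FTC is applicable, i.e.\ that the integrand $g$ is a legitimate integrand (rd-continuous, or at least regulated, so that its $\Delta$-antiderivative exists and differentiates back to $g$). Because $p$ is $\Delta$-differentiable and never vanishes on $\T$, the quotient $p^{\Delta}/p$ is well-defined and rd-continuous, and $1+\mu(\tau)p^{\Delta}(\tau)/p(\tau)=p^{\sigma}(\tau)/p(\tau)\ne 0$, so the argument of $\log$ stays off the branch point; hence $g$ inherits the needed regularity and the FTC applies. A secondary point worth a remark is that, because $\log$ is multi-valued, the identity holds as an equality of multi-valued quantities (equivalently, up to the usual additive $2\pi i\,\Z$ ambiguity), but this does not affect the differentiation since the derivative is computed pointwise and the branch choice simply propagates through.
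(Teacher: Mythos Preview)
Your proposal is correct and follows essentially the same route as the paper: differentiate the defining integral via the time scales Fundamental Theorem of Calculus to obtain $\ell_p^{\Delta}(t,s)=\zeta_{\mu(t)}\bigl(p^{\Delta}(t)/p(t)\bigr)$, then unpack the definition of $\zeta$ and use $p^{\sigma}=p+\mu p^{\Delta}$ to rewrite the argument of the logarithm. The paper's proof is more terse, omitting the regularity discussion and the remark on multi-valuedness, but the logical content is identical.
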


\begin{proof}
Using the definition of the logarithm and $\Delta$-differentiating with respect to $t$,
\begin{eqnarray*}
\ell_{p}^{\Delta}(t,s) &=& \zeta_{\mu(t)}\left(\frac{p^{\Delta}(t)}{p(t)}\right) \\
&=& \begin{cases} \displaystyle\frac{1}{\mu(t)}\log\left(1+\mu(t)\frac{p^{\Delta}(t)}{p(t)}\right) &\text{for}\; \mu(t)\ne 0 \\ \frac{p^{\Delta}(t)}{p(t)} &\text{for}\; \mu(t)=0. \end{cases}
\end{eqnarray*}
Now substitute $\mu p^{\Delta}=p^{\sigma}-p$. This ends the proof.
\end{proof}


\begin{example}\label{example1overt}
Let $t\in\T$ with $t\ne 0$, and set $p(t)=t$. For $s\in\T$, we have
$$ \ell_{p}^{\Delta}(t,s)=\begin{cases} \displaystyle\frac{1}{\mu(t)}\log\left(\frac{\sigma(t)}{t}\right) &\text{for}\; \mu(t)\ne 0 \\ \displaystyle\frac{1}{t} &\text{for}\; \mu(t)=0, \end{cases} $$
where $\Delta$-differentiation is with respect to $t$. Thus,
$$ \ell_{p}^{\Delta}(t,s)
=\begin{cases} 
 \displaystyle\frac{1}{t} &\text{for}\; \T=\R \\
 \displaystyle\frac{1}{h}\log\left(1+\frac{h}{t}\right) &\text{for}\; \T=h\Z \\
 \displaystyle\frac{\log(q)}{(q-1)t} &\text{for}\; \T=q^{\N_{0}},
\end{cases} $$
where $h>0$ and $q>1$.

See Figure 1 for $\T=\Z$. This ends the example.
\begin{figure}
  \centering
    \includegraphics[width=0.6\textwidth]{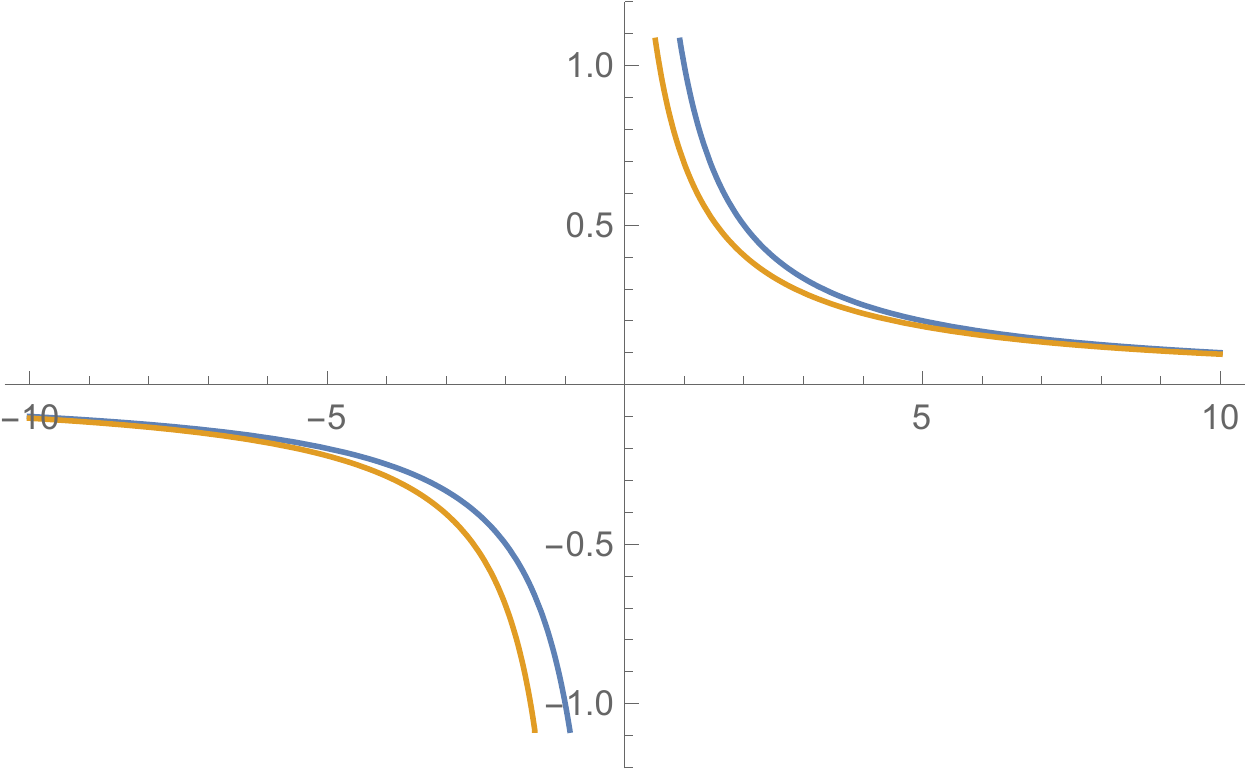}
		  \caption{A plot of the function $\frac{1}{t}$ versus $\Log\left(1+\frac{1}{t}\right)$ for Example \ref{example1overt}.}
\end{figure}
\end{example}


\begin{example}
Consider a discrete time scale with alternating graininess function. In particular, for the two step sizes $\alpha,\beta>0$ with $\alpha\ne\beta$, let
\[ \T:=\ensuremath{\mathbb{T}}_{\alpha,\beta}=\{0, \alpha, (\alpha+\beta), (\alpha+\beta)+\alpha, 2(\alpha+\beta), 2(\alpha+\beta)+\alpha, 3(\alpha+\beta),\cdots\}. \]
Then, for $t\in\T$ and $k\in\N_0=\{0,1,2,3,4,\cdots\}$, we have 
\[ \mu(t)=\begin{cases} \alpha &\text{for}\; t=k(\alpha+\beta), \\ \beta &\text{for}\; t=k(\alpha+\beta)+\alpha. \end{cases} \]
Set $p(t)=t$. We claim that for $t\in\ensuremath{\mathbb{T}}_{\alpha,\beta}$ with $t\ne 0$, 
\[ \ell_{p}^{\Delta}(t,s)
=\begin{cases} 
 \displaystyle\frac{1}{\alpha}\log\left(1+\frac{\alpha}{t}\right) &\text{for}\; t = k(\alpha+\beta) \\
 \displaystyle\frac{1}{\beta}\log\left(1+\frac{\beta}{t}\right) &\text{for}\; t = k(\alpha+\beta)+\alpha.
\end{cases} \]
To see this, note that
\begin{eqnarray*}
 \ell_{p}^{\Delta}(t,s)
&=& \displaystyle\frac{1}{\mu(t)}\log\left(\frac{\sigma(t)}{t}\right) \\
&=& \begin{cases} 
 \displaystyle\frac{1}{\alpha}\log\left(\frac{k(\alpha+\beta)+\alpha}{k(\alpha+\beta)}\right) &\text{for}\; t=k(\alpha+\beta) \\
 \displaystyle\frac{1}{\beta}\log\left(\frac{(k+1)(\alpha+\beta)}{k(\alpha+\beta)+\alpha}\right) &\text{for}\; t=k(\alpha+\beta)+\alpha
\end{cases}  \\
&=& \begin{cases} 
 \displaystyle\frac{1}{\alpha}\log\left(1+\frac{\alpha}{t}\right) &\text{for}\; t=k(\alpha+\beta) \\
 \displaystyle\frac{1}{\beta}\log\left(1+\frac{\beta}{t}\right) &\text{for}\; t=k(\alpha+\beta)+\alpha.
\end{cases}
\end{eqnarray*}
This ends the example.
\end{example}


\section{the nabla case}\label{nablacase}

A logarithm is also possible for the nabla case.


\begin{definition}[Cylinder Transformation]
For $h>0$, define the single-valued cylinder transformation $\widehat{\xi}_h:\widehat{\C}_h\rightarrow\Z_h$ by
\begin{equation}\label{single-nu-cylinder}
 \widehat{\xi}_h(z) = \begin{cases} \displaystyle\frac{-1}{h}\Log(1-zh) &\text{for}\; h\ne 0 \\ z &\text{for}\; h=0 \end{cases} 
\end{equation}
and the multi-valued cylinder transformation $\widehat{\zeta}_h:\widehat{\C}_h\rightarrow\C$ by
\begin{equation}\label{multi-nu-cylinder}
\widehat{\zeta}_h(z) = \begin{cases} \displaystyle\frac{-1}{h}\log(1-zh) &\text{for}\; h\ne 0 \\ z &\text{for}\; h=0. \end{cases} 
\end{equation}
Here $\C$ is the set of complex numbers, $\Z_h$ is in \eqref{chzhdefs},
$$ \widehat{C}_h=\left\{z\in\C:\; z\ne \frac{1}{h}\right\}, $$
and $\Log$ is again the principal logarithm function.
\end{definition}

The following definition is \cite[Definition 3.4]{bp2}.


\begin{definition}[Regressive Function]
A function $p:\T\rightarrow\R$ is $\nu$-regressive provided
\[ 1-\nu(t)p(t) \ne 0 \quad\text{for all}\quad t\in\T_{\kappa} \]
holds. The set of all $\nu$-regressive and ld-continuous functions $p:\T\rightarrow\R$ is denoted by $\widehat{\mathcal{R}}$.
\end{definition}

The following definition is \cite[Definition 3.10]{bp2}.


\begin{definition}[Exponential Function]
For functions $p\in\widehat{\mathcal{R}}$, the nabla exponential function on time scales is given by
\[ \widehat{e}_p(t,s)=\exp\left(\int_s^t \widehat{\xi}_{\nu(\tau)}(p(\tau))\nabla\tau\right) \quad\text{for}\quad s,t\in\T, \]
where $\widehat{\xi}_h(z)$ is the single-valued cylinder transformation given in \eqref{single-nu-cylinder}.
\end{definition}

We now offer a new definition of logarithms for the nabla case on time scales.


\begin{definition}[Logarithm Function]
For a $\nabla$-differentiable function $p:\T\rightarrow\R$ with $p\ne 0$ on $\T$, the multi-valued nabla logarithm function on time scales is given by
\[ \widehat{\ell}_p(t,s) = \int_s^t \widehat{\zeta}_{\nu(\tau)}\left(\frac{p^{\nabla}(\tau)}{p(\tau)}\right)\nabla\tau \quad\text{for}\quad s,t\in\T, \]
where $\widehat{\zeta}_h(z)$ is the multi-valued cylinder transformation given in \eqref{multi-nu-cylinder}, while the principal nabla logarithm is given by
\[ \widehat{L}_p(t,s) = \int_s^t \widehat{\xi}_{\nu(\tau)}\left(\frac{p^{\nabla}(\tau)}{p(\tau)}\right)\nabla\tau \quad\text{for}\quad s,t\in\T, \]
where $\widehat{\xi}_h(z)$ is the single-valued nabla cylinder transformation given in \eqref{single-nu-cylinder}
\end{definition}

Properties analogous to those given earlier can be established for the nabla case as well.


\section{logarithms for Cayley-exponential functions}\label{SectionCayley}

In \cite{cieslinski}, the author introduced an improved exponential function (or the Cayley-exponential function) on a time scale defined by
\begin{equation}\label{cayley-exp}
 E_p(t,s) = \exp\left(\int_s^t \Psi_{\mu(\tau)}(p(\tau))\Delta\tau\right), 
\end{equation}
where $p:\T\rightarrow\C$ is rd-continuous and satisfies the regressivity condition $\mu(\tau)p(\tau)\ne \pm 2$ for all $\tau\in\T^{\kappa}$, and the modified cylinder transformation $\Psi$ is given by
\begin{equation}\label{cayley-singlecyl}
 \Psi_h(z) = \frac{1}{h}\Log\left(\frac{1+\frac{1}{2}zh}{1-\frac{1}{2}zh}\right), \quad \Psi_0(z)=z,
\end{equation}
for $h>0$. Here again, $\Log$ represents the principal complex logarithm.
Consider the multi-valued function version of \eqref{cayley-singlecyl} denoted, i.e., 
\begin{equation}\label{cayley-multicyl}
\psi_h(z) = \frac{1}{h}\log\left(\frac{1+\frac{1}{2}zh}{1-\frac{1}{2}zh}\right), \quad \psi_0(z)=z,
\end{equation}
where $\log$ represents the multi-valued complex logarithm. We introduce the following Cayley-logarithm functions on time scales.


\begin{definition}
For a $\Delta$-differentiable function $p:\T\rightarrow\C$ with $p\ne 0$ on $\T$, the multi-valued Cayley-logarithm function on time scales is given by
\[ \operatorname{caylog}_p(t,s) = \int_s^t \psi_{\mu(\tau)}\left(\frac{2p^{\Delta}(\tau)}{p(\tau)+p^{\sigma}(\tau)}\right)\Delta\tau \quad\text{for}\quad s,t\in\T, \]
where $\psi_h(z)$ is the multi-valued cylinder transformation given in \eqref{cayley-multicyl}. Define the principal Cayley-logarithm on time scales to be
\[ \operatorname{CayLog}_p(t,s) = \int_s^t \Psi_{\mu(\tau)}\left(\frac{2p^{\Delta}(\tau)}{p(\tau)+p^{\sigma}(\tau)}\right)\Delta\tau \quad\text{for}\quad s,t\in\T, \]
where $\Psi_h(z)$ is the single-valued cylinder transformation given in \eqref{cayley-singlecyl}.
\end{definition}


\begin{lemma}
The Cayley-logarithm functions are well-defined functions.
\end{lemma}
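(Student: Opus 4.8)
The plan is to show that at every $\tau\in\T^{\kappa}$ the quantity fed into the Cayley cylinder transformation lands in the domain where the logarithm is defined, so that the two integrands are genuine complex numbers and the defining $\Delta$-integrals make sense. The crux is the observation that, once the argument $z=\tfrac{2p^{\Delta}}{p+p^{\sigma}}$ is inserted into $\psi_{\mu}$ (respectively $\Psi_{\mu}$), the fraction inside the logarithm collapses to $\tfrac{p^{\sigma}}{p}$, which the standing hypothesis $p\ne 0$ on $\T$ forces to be a nonzero complex number. In effect, well-definedness reduces to applicability of the (multi-valued or principal) logarithm to $p^{\sigma}/p$.

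First I would dispose of the trivial case: when $\mu(\tau)=0$ one has $p^{\sigma}(\tau)=p(\tau)$, so the argument reduces to $\tfrac{p^{\Delta}(\tau)}{p(\tau)}$ and $\psi_0$ (respectively $\Psi_0$) is the identity, a value that exists because $p(\tau)\ne 0$. For $\mu(\tau)\ne 0$, I would set $z=\tfrac{2p^{\Delta}(\tau)}{p(\tau)+p^{\sigma}(\tau)}$ and substitute the simple useful formula $\mu p^{\Delta}=p^{\sigma}-p$ already exploited in Lemma \ref{ximuprop}, obtaining
\[ 1+\tfrac{1}{2}\mu(\tau)z=\frac{2p^{\sigma}(\tau)}{p(\tau)+p^{\sigma}(\tau)},\qquad 1-\tfrac{1}{2}\mu(\tau)z=\frac{2p(\tau)}{p(\tau)+p^{\sigma}(\tau)}. \]
Dividing, the common factor $\tfrac{2}{p+p^{\sigma}}$ cancels and the argument of the logarithm becomes exactly $\tfrac{p^{\sigma}(\tau)}{p(\tau)}$. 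Since $p\ne 0$ on $\T$ forces both $p(\tau)\ne 0$ and $p^{\sigma}(\tau)=p(\sigma(\tau))\ne 0$, this quotient lies in $\C\setminus\{0\}$, so both $\log$ and $\Log$ are applicable; equivalently, the Cayley regressivity condition $\mu(\tau)z\ne\pm 2$ holds, because $\mu z=2$ would force $p=0$ and $\mu z=-2$ would force $p^{\sigma}=0$.

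The one point requiring care, and the main obstacle, is that the raw argument $z$ is formally undefined wherever $p(\tau)+p^{\sigma}(\tau)=0$, that is, at right-scattered points with $p^{\sigma}=-p$. I would handle this by noting that the cancellation above shows the singularity is removable: the composite $\psi_{\mu(\tau)}(z)$ (resp. $\Psi_{\mu(\tau)}(z)$) depends only on the quotient $\tfrac{p^{\sigma}}{p}$, which equals $-1$ at such points and is perfectly admissible for the logarithm, in agreement with the limiting value $\lim_{z\to\infty}\tfrac{1}{\mu}\log\tfrac{1+\frac{1}{2}\mu z}{1-\frac{1}{2}\mu z}=\tfrac{1}{\mu}\log(-1)$. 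Thus the integrand is defined at every $\tau\in\T^{\kappa}$, with no exceptional points.

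Finally I would conclude that each integrand equals $\tfrac{1}{\mu(\tau)}\log\tfrac{p^{\sigma}(\tau)}{p(\tau)}$ (resp. with $\Log$), a well-defined expression on $\T^{\kappa}$, so that under the usual regularity the $\Delta$-integrals $\operatorname{caylog}_p(t,s)$ and $\operatorname{CayLog}_p(t,s)$ exist for all $s,t\in\T$. As a bonus, this same computation identifies the Cayley integrand with that of $\ell_p$ and $L_p$, which is the bridge to the subsequent claim that the Cayley construction yields the very same logarithm.
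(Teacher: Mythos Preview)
Your proof is correct and shares the paper's core argument: the Cayley regressivity condition $\mu(\tau)z\ne\pm 2$ holds because its failure would force $p(\tau)=0$ or $p^{\sigma}(\tau)=0$, contradicting the hypothesis $p\ne 0$ on $\T$. You actually go further than the paper does in this lemma --- explicitly simplifying the logarithm's argument to $p^{\sigma}/p$ (a computation the paper postpones to the proof of the subsequent theorem) and flagging the edge case $p(\tau)+p^{\sigma}(\tau)=0$, which the paper's proof passes over in silence.
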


\begin{proof}
For a $\Delta$-differentiable function $p:\T\rightarrow\C$ with $p\ne 0$ on $\T$, we need to show that 
\[ \mu(\tau)\frac{2p^{\Delta}(\tau)}{p(\tau)+p^{\sigma}(\tau)} \ne \pm 2, \]
in other words, that the regressivity condition holds. The following are equivalent:
\begin{eqnarray*}
\frac{2\mu(\tau)p^{\Delta}(\tau)}{p(\tau)+p^{\sigma}(\tau)} &=& \pm 2 \\
\frac{p^{\sigma}(\tau)-p(\tau)}{p(\tau)+p^{\sigma}(\tau)} &=& \pm 1 \\
p^{\sigma}(\tau)-p(\tau) &=& \pm\left(p(\tau)+p^{\sigma}(\tau)\right) \\
p^{\sigma}(\tau) \mp p^{\sigma}(\tau) &=& p(\tau) \pm p(\tau),
\end{eqnarray*}
so that we have either  $0=2p(\tau)$ or $2p^{\sigma}(\tau)=0$, both contradictions.
\end{proof}

\begin{theorem}
For a $\Delta$-differentiable function $p:\T\rightarrow\C$ with $p\ne 0$ on $\T$,
\begin{equation}
 \operatorname{caylog}_p(t,s) = \ell_p(t,s) \quad\text{and}\quad \operatorname{CayLog}_p(t,s) = L_p(t,s)
\end{equation}
for all $t,s\in\T$.
\end{theorem}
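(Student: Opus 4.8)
The plan is to reduce both stated identities to a single pointwise identity between the two integrands, and then verify that identity by a direct algebraic simplification using the fundamental relation $p^{\sigma}=p+\mu p^{\Delta}$. Since $\operatorname{caylog}_p$ and $\ell_p$ (respectively $\operatorname{CayLog}_p$ and $L_p$) are both $\Delta$-integrals over $[s,t)$ of the same type as in the logarithm definitions, it suffices to show that for each fixed $\tau\in\T^{\kappa}$,
\[ \psi_{\mu(\tau)}\!\left(\frac{2p^{\Delta}(\tau)}{p(\tau)+p^{\sigma}(\tau)}\right) = \zeta_{\mu(\tau)}\!\left(\frac{p^{\Delta}(\tau)}{p(\tau)}\right), \]
together with the analogous identity for the single-valued transformations $\Psi$ and $\xi$; integrating then yields both claimed equalities at once.

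Fix $\tau$ and write $h=\mu(\tau)$, suppressing the argument $\tau$. First I would dispose of the right-dense case $h=0$: here $p^{\sigma}=p$, so the Cayley argument $\frac{2p^{\Delta}}{p+p^{\sigma}}$ collapses to $\frac{p^{\Delta}}{p}$, and both $\psi_0$ and $\zeta_0$ act as the identity, giving agreement immediately (and likewise for $\Psi_0,\xi_0$). For $h\ne 0$, set $z=\frac{2p^{\Delta}}{p+p^{\sigma}}$ and use $hp^{\Delta}=p^{\sigma}-p$ to compute the numerator and denominator of the Cayley fraction separately:
\[ 1+\tfrac12 zh = \frac{p+p^{\sigma}+hp^{\Delta}}{p+p^{\sigma}} = \frac{2p^{\sigma}}{p+p^{\sigma}}, \qquad 1-\tfrac12 zh = \frac{p+p^{\sigma}-hp^{\Delta}}{p+p^{\sigma}} = \frac{2p}{p+p^{\sigma}}. \]
Dividing, the common factor $p+p^{\sigma}$ cancels and I obtain the key simplification $\frac{1+\frac12 zh}{1-\frac12 zh}=\frac{p^{\sigma}}{p}$.

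With this in hand, $\psi_h(z)=\frac1h\log\!\left(\frac{p^{\sigma}}{p}\right)$, while $\zeta_h\!\left(\frac{p^{\Delta}}{p}\right)=\frac1h\log\!\left(1+h\frac{p^{\Delta}}{p}\right)=\frac1h\log\!\left(\frac{p^{\sigma}}{p}\right)$ by the same relation $p^{\sigma}=p+hp^{\Delta}$; the two coincide, and the identical computation with $\Log$ in place of $\log$ handles the principal case $\Psi_h(z)=\xi_h\!\left(\frac{p^{\Delta}}{p}\right)$.

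I do not expect a genuine obstacle here, since after the cancellation the argument of the logarithm on both sides is literally the same complex number $\frac{p^{\sigma}}{p}$, so no branch-splitting identity for the logarithm is invoked. The only point requiring a word of care is the multi-valued case: I must note that the multi-valued logarithm assigns the same set of values to equal arguments, so the two integrands agree as multi-valued functions and the $\Delta$-integrals therefore agree branch-by-branch, while for the single-valued (principal) versions the equality is of ordinary complex numbers and follows verbatim.
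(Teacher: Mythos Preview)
Your proposal is correct and follows essentially the same approach as the paper: reduce both identities to the pointwise equality of integrands, then use $p^{\sigma}=p+\mu p^{\Delta}$ to simplify the Cayley fraction to $\frac{p^{\sigma}}{p}$, treating the right-dense and right-scattered cases separately. The paper carries out the same computation (writing the chain of simplifications in one display rather than computing numerator and denominator separately), and integrates to conclude.
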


\begin{proof}
Consider \eqref{cayley-multicyl}. For fixed $\tau\in\T^{\kappa}$ with $\mu(\tau)\ne 0$, we have
\begin{eqnarray*}
\psi_{\mu(\tau)}\left(\frac{2p^{\Delta}(\tau)}{p(\tau)+p^{\sigma}(\tau)}\right) &=&  \frac{1}{\mu(\tau)} \log\left(\frac{1+\frac{1}{2}\frac{2p^{\Delta}(\tau)}{p(\tau)+p^{\sigma}(\tau)}\mu(\tau)}{1-\frac{1}{2}\frac{2p^{\Delta}(\tau)}{p(\tau)+p^{\sigma}(\tau)}\mu(\tau)}\right) \\
&=& \frac{1}{\mu(\tau)} \log\left(\frac{1+\frac{\mu(\tau)p^{\Delta}(\tau)}{p(\tau)+p^{\sigma}(\tau)}}{1-\frac{\mu(\tau)p^{\Delta}(\tau)}{p(\tau)+p^{\sigma}(\tau)}}\right) \\
&=& \frac{1}{\mu(\tau)} \log\left(\frac{1+\frac{p^{\sigma}(\tau)-p(\tau)}{p(\tau)+p^{\sigma}(\tau)}}{1-\frac{p^{\sigma}(\tau)-p(\tau)}{p(\tau)+p^{\sigma}(\tau)}}\right) \\
&=& \frac{1}{\mu(\tau)} \log\left(\frac{p^{\sigma}(\tau)}{p(\tau)}\right) \\
&=& \frac{1}{\mu(\tau)} \log\left(\frac{p(\tau)+\mu(\tau)p^{\Delta}(\tau)}{p(\tau)}\right) \\
&=& \zeta_{\mu(\tau)}\left(\frac{p^{\Delta}(\tau)}{p(\tau)}\right)
\end{eqnarray*}
for $\zeta_h$ defined in \eqref{cylinder}. For fixed $\tau\in\T^{\kappa}$ with $\mu(\tau)= 0$, we have $\sigma(\tau)=\tau$ and
$$ \frac{2p^{\Delta}(\tau)}{p(\tau)+p^{\sigma}(\tau)} = \frac{p^{\Delta}(\tau)}{p(\tau)}. $$ 
Consequently,
\begin{eqnarray*}
\psi_{\mu(\tau)}\left(\frac{2p^{\Delta}(\tau)}{p(\tau)+p^{\sigma}(\tau)}\right) &=&  
\frac{2p^{\Delta}(\tau)}{p(\tau)+p^{\sigma}(\tau)} \\
&=&\frac{p^{\Delta}(\tau)}{p(\tau)} \\
&=& \zeta_{\mu(\tau)}\left(\frac{p^{\Delta}(\tau)}{p(\tau)}\right).
\end{eqnarray*}
Thus, in either case, we have 
$$ \psi_{\mu(\tau)}\left(\frac{2p^{\Delta}(\tau)}{p(\tau)+p^{\sigma}(\tau)}\right)= \zeta_{\mu(\tau)}\left(\frac{p^{\Delta}(\tau)}{p(\tau)}\right). $$
It follows that
\begin{eqnarray*}
\operatorname{caylog}_p(t,s) &=& \int_s^t \psi_{\mu(\tau)}\left(\frac{2p^{\Delta}(\tau)}{p(\tau)+p^{\sigma}(\tau)}\right)\Delta\tau \\
&=& \int_s^t \zeta_{\mu(\tau)}\left(\frac{p^{\Delta}(\tau)}{p(\tau)}\right)\Delta\tau \\
&=& \ell_p(t,s).
\end{eqnarray*}
Similarly, we have
$$ \operatorname{CayLog}_p(t,s) = L_p(t,s), $$
completing the proof. 
\end{proof}


\begin{remark}
The previous theorem and proof can be generalized in the following way. Let $\eta\in[0,1]$, and set
\begin{equation}
\psi^{\eta}_h(z) = \frac{1}{h}\log\left(\frac{1+(1-\eta)hz}{1-\eta hz}\right), \quad \psi^{\eta}_0(z)=z.
\end{equation}
Then, for a $\Delta$-differentiable function $p:\T\rightarrow\C$ with $p\ne 0$ on $\T$, and for all $\tau\in\T^\kappa$, we have 
\begin{eqnarray*}
\lefteqn{\psi^{\eta}_{\mu(\tau)}\left(\frac{p^{\Delta}(\tau)}{(1-\eta)p(\tau)+\eta p^{\sigma}(\tau)}\right)}\\ 
&=& \frac{1}{\mu(\tau)} \log\left(\frac{1+(1-\eta)\mu(\tau)\frac{p^{\Delta}(\tau)}{(1-\eta)p(\tau)+\eta p^{\sigma}(\tau)}}{1-\eta\mu(\tau)\frac{p^{\Delta}(\tau)}{(1-\eta)p(\tau)+\eta p^{\sigma}(\tau)}}\right) \\
&=& \frac{1}{\mu(\tau)} \log\left(\frac{(1-\eta)p(\tau)+\eta p^{\sigma}(\tau)+(1-\eta)\mu(\tau)p^{\Delta}(\tau)}{(1-\eta)p(\tau)+\eta p^{\sigma}(\tau)-\eta\mu(\tau)p^{\Delta}(\tau)}\right) \\
&=& \frac{1}{\mu(\tau)} \log\left(\frac{(1-\eta)p(\tau)+\eta p^{\sigma}(\tau)+(1-\eta)(p^\sigma(\tau)-p(\tau))}{(1-\eta)p(\tau)+\eta p^{\sigma}(\tau)-\eta(p^\sigma(\tau)-p(\tau))}\right) \\
&=& \frac{1}{\mu(\tau)} \log\left(\frac{p^{\sigma}(\tau)}{p(\tau)}\right) \\
&=& \frac{1}{\mu(\tau)} \log\left(\frac{p(\tau)+\mu(\tau)p^{\Delta}(\tau)}{p(\tau)}\right) \\
&=& \zeta_{\mu(\tau)}\left(\frac{p^{\Delta}(\tau)}{p(\tau)}\right)
\end{eqnarray*}
for $\zeta_h$ defined in \eqref{cylinder}. For fixed $\tau\in\T^{\kappa}$ with $\mu(\tau)= 0$, we have $\sigma(\tau)=\tau$ and
$$ \frac{p^{\Delta}(\tau)}{(1-\eta)p(\tau)+\eta p^{\sigma}(\tau)} = \frac{p^{\Delta}(\tau)}{p(\tau)}. $$ 
As a result,
\begin{eqnarray*}
\psi^{\eta}_{0}\left(\frac{p^{\Delta}(\tau)}{(1-\eta)p(\tau)+\eta p^{\sigma}(\tau)}\right) &=&  
\frac{p^{\Delta}(\tau)}{(1-\eta)p(\tau)+\eta p^{\sigma}(\tau)} \\
&=&\frac{p^{\Delta}(\tau)}{p(\tau)} \\
&=& \zeta_{0}\left(\frac{p^{\Delta}(\tau)}{p(\tau)}\right).
\end{eqnarray*}
Thus, in either case, we have 
$$ \psi^{\eta}_{\mu(\tau)}\left(\frac{p^{\Delta}(\tau)}{(1-\eta)p(\tau)+\eta p^{\sigma}(\tau)}\right)= \zeta_{\mu(\tau)}\left(\frac{p^{\Delta}(\tau)}{p(\tau)}\right) $$
for all $\eta\in[0,1]$. Consequently,
\begin{eqnarray*}
\log^{\eta}_p(t,s) &:=& \int_s^t \psi^{\eta}_{\mu(\tau)}\left(\frac{p^{\Delta}(\tau)}{(1-\eta)p(\tau)+\eta p^{\sigma}(\tau)}\right)\Delta\tau \\
&=& \int_s^t \zeta_{\mu(\tau)}\left(\frac{p^{\Delta}(\tau)}{p(\tau)}\right)\Delta\tau \\
&=& \ell_p(t,s).
\end{eqnarray*}
This ends the remark.
\end{remark}


\section{previous logarithms on time scales}\label{historylog}

As shown in previous sections, the key to arriving at useful logarithm properties is to allow for a multi-valued logarithm, as exists for the $\T=\R$ case. Here, we present the previous definitions of a logarithm on time scales, noting that they are all single-valued functions.

The first logarithm on time scales \cite{huff} interprets the integral
\[ \int_{t_0}^t \frac{2}{\tau+\sigma(\tau)}\Delta\tau \]
as a time scales analogue of $\ln t$. This is understandable, because if $\T=\R$, then $\sigma(\tau)=\tau$, and
\[ \int_{t_0}^t \frac{2}{\tau+\sigma(\tau)}\Delta\tau = \int_{t_0}^t \frac{2}{2\tau} d\tau = \ln t - \ln t_0. \]

A second approach \cite[Section 3]{bohner} is to view the slightly different integral
\[ \int_{t_0}^t \frac{1}{\tau+2\mu(\tau)}\Delta\tau \] 
as the time scales version of $\ln t$, due to the same fact that it reduces to $\ln t - \ln t_0$ on $\T=\R$, and as it is part of a solution form to a certain Euler–Cauchy dynamic equation whose differential equation analogue involves the natural logarithm. 

A third approach \cite[Section 4]{bohner} could be to define a logarithm via
\[ L_p(t,t_0)=\int_{t_0}^{t}\frac{p^{\Delta}(\tau)}{p(\tau)}\Delta\tau \]
for $\Delta$-differentiable functions $p:\T\rightarrow\R$. Clearly if $p(\tau)=\tau$, then this is
\[ L_p(t,t_0)=\int_{t_0}^{t}\frac{p^{\Delta}(\tau)}{p(\tau)}\Delta\tau = \int_{t_0}^{t}\frac{1}{\tau}\Delta\tau, \]
a form that is similar to its continuous analogue for $\T=\R$.

A fourth approach \cite{jackson} is to take the logarithm to be given by
\[ \log_{\T} p(t)=\frac{p^{\Delta}(t)}{p(t)} \]
for $\Delta$-differentiable functions $p:\T\rightarrow\R$, where the time scale logarithm on $\R$ does not play the role of the logarithm, clearly, but rather its derivative. The motivation here is to maintain some attractive algebraic properties of logarithms, and to serve in some sense as an inverse to the exponential function.

A fifth approach \cite{mozyrska}, only for time scales such that $1\in\T$, is to define the natural logarithm via
\[ L_{\T}(t)=\int_1^t\frac{1}{\tau}\Delta\tau, \]
which hearkens back to \cite[Section 4]{bohner}. Here the motivation is clearly that
\[ L_{\R}(t)=\ln t, \quad L_{\T}(1)=0, \quad L^{\Delta}_{\T}(t)=\frac{1}{t}. \] 

Each of these definitions has advantages and drawbacks, and each one satisfies some of what one might wish for in a logarithm function. As shown earlier in this work, however, a multi-valued logarithm on time scales with a definition based on cylinder transformations is a natural move that leads to nice properties.




\begin{thebibliography}{999}

\bibitem{bp} M. Bohner and A. Peterson,
\emph{Dynamic Equations on Time Scales: An Introduction with Applications},
Birkh\"auser: Boston, 2001.

\bibitem{bp2} M. Bohner and A. Peterson, editors, 
\emph{Advances in Dynamic Equations on Time Scales}, Birkh\"auser, Boston, 2003. 

\bibitem{bohner} M. Bohner,
The logarithm on time scales, 
\emph{J. Difference Equations Appl.} 11:15 (2005) 1305--1306.

\bibitem{cieslinski} J. L. Cie\'{s}li\'{n}ski, 
New definitions of exponential, hyperbolic and trigonometric functions
on time scales, \emph{J. Math. Anal. Appl.} 388 (2012) 8--22. doi: 10.1016/j.jmaa.2011.11.023

\bibitem{huff} S. Huff, G. Olumolode, N. Pennington, and A. Peterson,
Oscillation of an Euler--Cauchy dynamic equation, Proceedings of the Fourth International Conference on Dynamical Systems and Differential Equations, \emph{Discrete Contin. Dyn. Syst.} (2003) 423--431. doi: 10.3934/proc.2003.2003.423

\bibitem{jackson} B. Jackson,
The time scale logarithm,
\emph{Appl. Math. Letters} 21 (2008) 215--221.

\bibitem{mozyrska} D. Mozyrska and D. F. M. Torres,
The natural logarithm on time scales, 
\emph{J. Dyn. Sys. Geom. Theories} 7:1 (2009) 41--48.


\end{thebibliography}
\end{document}